\newtheorem{thm}{Theorem}[section]
\newtheorem{cor}[thm]{Corollary}
\newtheorem{lem}[thm]{Lemma}
\numberwithin{equation}{section}
\title[$p$-Domination Number and Multiplicity of Adjacency Eigenvalues]{A bound for the $p$-domination number of a graph in terms of its eigenvalue multiplicities}
\author{A. Abiad}
\address{Department of Mathematics and Computer Science, Eindhoven University of Technology, the Netherlands}
\address{Department of Mathematics: Analysis, Logic and Discrete Mathematics, Ghent University, Belgium}
\address{Department of Mathematics and Data Science, Vrije Universiteit Brussel, Belgium}
\email{a.abiad.monge@tue.nl}
\author{S. Akbari}
\address{Department of
Mathematical Sciences, Sharif University of Technology, Iran}
\email{s\_akbari@sharif.edu}
\author{M.H. Fakharan}
\address{Department of
Mathematical Sciences, Sharif University of Technology, Iran}
\email{mh.fakharan92@student.sharif.ir}
\author{A. Mehdizadeh}
\address{Department of
Mathematical Sciences, Sharif University of Technology, Iran}
\email{mehdizadehalireza7@gmail.com}
\date{}
\begin{document}
\subjclass[2010]{05C50,05C69,15A18}

\keywords{adjacency matrix, Laplacian matrix, eigenvalue multiplicity, $p$-domination number, total domination number, rank}

\begin{abstract}
Let $G$ be a connected graph of order $n$ with domination number $\gamma(G)$.  Wang, Yan, Fang, Geng and Tian [Linear Algebra Appl. 607 (2020), 307-318] showed that 
for any Laplacian eigenvalue $\lambda$ of $G$ with multiplicity $m_G(\lambda)$, it holds that $\gamma(G)\leq n-m_G(\lambda)$. Using techniques from the theory of star sets, in this work we prove that the same bound holds when $\lambda$ is an arbitrary adjacency eigenvalue of a non-regular graph, and we characterize the cases of equality. Moreover, we show a result that gives a relationship between start sets and the $p$-domination number, and we apply it to extend the aforementioned spectral bound to the $p$-domination number using the adjacency and Laplacian eigenvalue multiplicities.
\end{abstract}

\maketitle

\section{Introduction}
 A set $S \subseteq V(G)$ is called {\it dominating} ({\it total dominating}) if every $v \in V(G) \setminus S$ ($v\in V(G)$) is adjacent to some vertex in $S$. The {\it domination number} ({\it total domination number}) $\gamma(G)$ ($\gamma_t(G)$) is the minimum size of a dominating set of $G$ (total dominating set of $G$). For instance $\gamma(K_n)=\gamma(K_{1,n})=1$ and $\gamma_t( K_n)=\gamma(K_{1,n})=2$. Observe that $\gamma(G)\leq \gamma_t(G)$. More generally, a set $S$ is called $p$-\emph{dominating} if every $v\in V \backslash S$ is adjacent to at least $p$ vertices of $S$. The $p$-{\it domination number}, denoted by $\gamma_p(G)$, is the minimum size of a $p$-dominating set of $G$. 

The relation between the Laplacian eigenvalues of a graph and the domination number has received a great deal of attention in the literature. Bounds on the domination number involving the largest Laplacian eigenvalue are shown, among others, by Brand and Seifter \cite{bs}, Xing and Zhou \cite{xz}, Nikiforov \cite{N}. Bounds for the domination number using the second largest Laplacian eigenvalue are shown by Aouchiche, Hansen and Stevanović \cite{AHS}, and by Har \cite{H}. The domination number has also been studied in relation to the Laplacian eigenvalue distribution, see Hedetniem, Jacobs and Trevisan \cite{hjt}. The Laplacian eigenvalues have also been used to provide bounds for the $p$-domination number, see Abiad, Fiol, Haemers and Perarnau \cite{afhp}.  While several results are known to connect the domination number with the Laplacian eigenvalues, not much is known about the relation of the domination number with the adjacency spectrum for non-regular graphs. This work provides a step further in this direction.

Wang, Yan, Fang, Geng and Tian \cite[Theorem 4.5]{wyfgt} recently showed that if one considers $\lambda$ to be a Laplacian eigenvalue of a graph $G$ with multiplicity $m_G(\lambda)$, then it holds that $\gamma(G)\leq n-m_G(\lambda)$. While for regular graphs such bound also applies to the adjacency eigenvalues, it was not clear if that was the case for general graphs. Using a completely different approach based on techniques from star sets, in this work we show that such bound is also valid if one uses the adjacency eigenvalues of a graph, and we also study the tightness of our bound. Star sets were first introduced by Cvetković, Rowlinson and Simić in 1993 as a way to study eigenspaces of graphs and also to investigate the graph isomorphism problem  \cite{cvet3}. They soon became a powerful tool due to their strong link between graphs and linear algebra. This connection is promising in that it not only reflects the geometry of eigenspaces but also extends to combinatorial aspects. We extend results of Cvetković, Rowlinson and Simić, who studied the link between star sets and dominating sets. In particular, we show a new relation with $p$-dominating sets, and we use it to prove a spectral bound for $\gamma_p$.


\section{Preliminaries}\label{sec:preliminaries}

Let $G = (V(G),E(G))$ be a graph, where $V (G) = \{v_1, \ldots , v_n\}$ is the vertex set and $E(G)$ is the edge set of $G$, respectively. Throughout this paper all graphs are connected, simple and undirected. Let $d_G(v_i)$ be the degree of $v_i$ in $G$. Let $\delta(G)$ denote the minimum degree of  $G$.  Let $K_n$, $K_{r,s}$ and $C_n$ denote the complete graph of order $n$, the complete bipartite graph with part sizes $r$ and $s$, and the cycle of order $n$, respectively.

The {\em adjacency matrix} of $G$, denoted by $A(G)$, is an $n\times n$ matrix whose $(i, j)$-entry is $1$ if $v_i$ and $v_j$ are adjacent and $0$, otherwise. The eigenvalues of $A(G)$ are called the \emph{adjacency eigenvalues} (\emph{eigenvalues} for short) of $G$. The multiplicity of an eigenvalue $\lambda$ in a graph $G$ is denoted by $m_G(\lambda)$. The rank of a graph $G$ of order $n$ is $n-m_G(0)$ and is denoted by $rank(G)$.

A subset $X$ of $V (G) = \{v_1, \ldots , v_n\}$ is called a \emph{star set} if the matrix obtained from $A(G)$ by removing rows and columns corresponding to $X$ does not have $\lambda$ as an eigenvalue. In graph theory context, a star set for an eigenvalue $\lambda$ of $G$ is a subset $X$ of vertices such that $\lambda$ is not an eigenvalue of $G \setminus X$. The set $\bar{X}=G \setminus X$ is called a \emph{star complement} for $\lambda$ in $G$.

Next we recall two important results about star sets from \cite{R1994,cvet} which will be the key ingredients for our proofs.

\begin{lem}\label{dom}
{\em \cite[Proposition ~5.1.4]{cvet}.} Let $X$ be a star set for $\lambda$ in $G$, and let $\bar{X} = V(G) \setminus X$.\\
($i$) If $\lambda \neq 0$, then $\bar{X}$ is a dominating set for $G$.\\
($ii$) If $\lambda \neq -1$ or $0$, then $\bar{X}$ is a location-dominating set for $G$, that is the
$\bar{X}$-neighbourhoods of distinct vertices in $X$ are distinct and non-empty.
\end{lem}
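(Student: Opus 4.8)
The plan is to use the star complement technique to produce, for each vertex of $X$, a distinguished $\lambda$-eigenvector supported in a controlled way, and then to read off both assertions from the eigenvalue equation at a single vertex. First I would fix the block decomposition of the adjacency matrix with respect to the partition $V(G)=X\cup\bar X$, writing
\[
A(G)=\begin{pmatrix}A_X & B^{\top}\\ B & C\end{pmatrix},
\]
where $C=A(G\setminus X)$ and, for $u\in X$, the column $Be_u\in\mathbb R^{\bar X}$ is exactly the characteristic vector of $N(u)\cap\bar X$. Since $X$ is a star set, $\lambda$ is not an eigenvalue of $G\setminus X$, so $C-\lambda I$ is nonsingular. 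Writing an eigenvector $x=(x_X,x_{\bar X})^{\top}$ in the $\lambda$-eigenspace $\mathcal E(\lambda)$ and expanding $A(G)x=\lambda x$ blockwise, the $\bar X$-block gives $x_{\bar X}=-(C-\lambda I)^{-1}Bx_X$; hence the coordinate projection $x\mapsto x_X$ is injective on $\mathcal E(\lambda)$, and because $|X|=m_G(\lambda)=\dim\mathcal E(\lambda)$ it is in fact a linear isomorphism $\mathcal E(\lambda)\to\mathbb R^X$. In particular, for each $u\in X$ there is a unique $x^{u}\in\mathcal E(\lambda)$ with $x^{u}_X=e_u$.

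For part $(i)$, I would evaluate the eigenvalue equation $(A(G)x^u)(u)=\lambda x^u(u)$ at the vertex $u$. Since $x^u$ vanishes on $X\setminus\{u\}$ and $G$ has no loops, every term coming from a neighbour of $u$ inside $X$ drops out, leaving $\lambda=\lambda x^u(u)=\sum_{v\in N(u)\cap\bar X}x^u(v)$. If $u$ had no neighbour in $\bar X$ the right-hand side would be an empty sum and we would obtain $\lambda=0$, contrary to hypothesis; therefore every $u\in X$ has a neighbour in $\bar X$. As $V(G)\setminus\bar X=X$, this says precisely that $\bar X$ is a dominating set, proving $(i)$.

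For part $(ii)$ the nonemptiness of the $\bar X$-neighbourhoods is exactly $(i)$, so only distinctness remains. Suppose $u\neq w$ in $X$ satisfy $N(u)\cap\bar X=N(w)\cap\bar X$. Using the isomorphism above I would take the eigenvector $y\in\mathcal E(\lambda)$ with $y_X=e_u-e_w$; then $y_{\bar X}=-(C-\lambda I)^{-1}B(e_u-e_w)=0$, because $B(e_u-e_w)$ is the difference of the two equal neighbourhood characteristic vectors. Thus $y$ is supported on $\{u,w\}$ with $y(u)=1$ and $y(w)=-1$. Evaluating $A(G)y=\lambda y$ at $u$ and again discarding the vanishing and loopless terms yields $\lambda=\lambda y(u)=y(w)\,[\,u\sim w\,]=-[\,u\sim w\,]$, so $\lambda\in\{0,-1\}$, contradicting $\lambda\neq 0,-1$. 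Hence distinct vertices of $X$ have distinct $\bar X$-neighbourhoods. I expect the only delicate point to be the justification of the isomorphism $\mathcal E(\lambda)\cong\mathbb R^X$ (the star complement technique), which rests on combining the nonsingularity of $C-\lambda I$ with the defining cardinality $|X|=m_G(\lambda)$; once this is in place, both parts reduce to the one-line local computation of the eigenvalue equation at a single vertex.
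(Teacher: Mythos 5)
Your proof is correct: the reconstruction $x_{\bar X}=-(C-\lambda I)^{-1}Bx_X$ together with the cardinality condition $|X|=m_G(\lambda)$ gives the isomorphism $\mathcal{E}(\lambda)\cong\mathbb{R}^X$, and evaluating the eigenvalue equation at a single vertex of $X$ yields both parts exactly as you describe. The paper does not prove this lemma but cites it from Cvetkovi\'c--Rowlinson--Simi\'c (Proposition 5.1.4), and your argument is essentially the standard one behind that citation, so there is nothing further to compare.
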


 \begin{lem}\label{con}
{\em \cite[Theorem.~5.1.6]{cvet}}. Let $\lambda$ be an eigenvalue of a connected graph $G$, and let $K$ be a connected induced subgraph of $G$ not having $\lambda$ as an eigenvalue. Then $G$ has a connected star complement for $\lambda$ containing $K$.
\end{lem}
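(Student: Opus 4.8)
The plan is to prove the statement by a \emph{greedy extension}, building the connected star complement outward from $K$ one vertex at a time. Write $m=m_G(\lambda)$ and recall that an induced subgraph $G[B]$ is a star complement for $\lambda$ exactly when $\lambda\notin\mathrm{spec}(G[B])$ and $|B|=n-m$. I would run an induction on the deficiency $n-m-|B|$, starting from $B=V(K)$ and maintaining three invariants: $G[B]$ is connected, $V(K)\subseteq B$, and $\lambda\notin\mathrm{spec}(G[B])$. The whole proof then reduces to a single \emph{extension step}: if $G[B]$ is connected with $\lambda\notin\mathrm{spec}(G[B])$ and $|B|<n-m$, then there is a vertex $u\notin B$ adjacent to $B$ with $\lambda\notin\mathrm{spec}(G[B\cup\{u\}])$. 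Because $u$ is chosen adjacent to $B$, connectivity is preserved for free, and once the deficiency reaches $0$ the resulting $G[B]$ is the desired connected star complement containing $K$.

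Two standard tools feed the extension step. First, by Cauchy interlacing the multiplicity of $\lambda$ in $G[B\cup\{u\}]$ is either $0$ or $1$; and since $A(G[B])-\lambda I$ is invertible, a Schur-complement computation shows that it is $1$ precisely when $a_u^{\mathsf T}(\lambda I - A(G[B]))^{-1}a_u=\lambda$, where $a_u\in\{0,1\}^{B}$ records the edges from $u$ into $B$. Thus $\lambda\notin\mathrm{spec}(G[B\cup\{u\}])$ iff this scalar identity fails, so the task is to exhibit one neighbour $u$ of $B$ violating it. Second, letting $P$ be the orthogonal projection onto the eigenspace $\mathcal{E}=\mathcal{E}(\lambda)$ (of dimension $m$), the hypothesis $\lambda\notin\mathrm{spec}(G[B])$ forces that no nonzero vector of $\mathcal{E}$ is supported on $B$; this is equivalent to $\{Pe_v:v\in V\setminus B\}$ spanning $\mathcal{E}$. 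Regarding $\{Pe_v\}_{v\in V}$ as a vector matroid $M$ of rank $m$, star sets are exactly its bases (the Reconstruction Theorem), so a spanning set $V\setminus B$ contains a basis, and its complement is a star complement $G[C]$ with $B\subseteq C$ and $|C|=n-m$. This already proves a disconnected version of the lemma and shows $|B|\le n-m$.

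The main obstacle is precisely the gap between this matroid argument, which only guarantees that $B$ sits inside \emph{some} star complement, and the extension step, which demands an addable vertex \emph{adjacent} to $B$; indeed the empty set shows that a set admitting no single-vertex extension need not already have size $n-m$, so the seed really must be the nonempty connected $K$ and adjacency cannot be ignored. I would attack this in two complementary ways. The direct route supposes that every boundary vertex $u\in\partial B$ satisfies $a_u^{\mathsf T}(\lambda I - A(G[B]))^{-1}a_u=\lambda$; feeding the global eigen-equation $Aw=\lambda w$ for a $\lambda$-eigenvector $w$ into the block decomposition along $B$ and $\partial B$ then yields, after eliminating $w|_{B}$ via $(\lambda I - A(G[B]))^{-1}$, a nonzero $\lambda$-eigenvector of $G$ supported on $B\cup\partial B$, which I would push to a contradiction with $\lambda\notin\mathrm{spec}(G[B])$. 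The cleaner conceptual route reconnects a disconnected star complement: take $G[C]$ as above, let $D$ be the component of $G[C]$ containing $K$, and if $D\neq C$ use connectivity of $G$ to find $y\in V\setminus C$ adjacent to $D$; the basis-exchange axiom of $M$ then lets me swap $y$ into the star complement while removing a vertex outside $V(K)$, producing a new star complement whose $K$-component strictly contains $D$ and contradicting a maximality choice of $D$. I expect the delicate point to be guaranteeing that this exchange can be performed \emph{legally}, i.e.\ that the removed vertex simultaneously restores a basis and enlarges the component containing $K$; checking that the only obstruction (all candidate vertices lying in a single rank-$(m-1)$ flat) cannot occur is the technical heart of the argument.
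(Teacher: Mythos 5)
The paper offers no proof of this lemma: it is quoted verbatim from Cvetkovi\'c--Rowlinson--Simi\'c \cite[Theorem 5.1.6]{cvet}, so there is no in-paper argument to compare against. Judged on its own terms, your proposal assembles the right machinery (the bound $|B|\le n-m$, the extension of any $\lambda$-free induced subgraph to a star complement via the spanning property of $\{Pe_v: v\notin B\}$, the Schur-complement criterion $a_u^{\mathsf T}(\lambda I-A(G[B]))^{-1}a_u=\lambda$ for when a single added vertex creates the eigenvalue), and you correctly locate the difficulty in forcing connectivity. But the proof is not closed: both routes you offer stop exactly at the point where the real work begins, and one of them contains a step that does not follow.

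Concretely: in the ``direct route,'' assuming that \emph{every} boundary vertex $u\in\partial B$ satisfies the scalar identity gives you, for each $u$ separately, a $\lambda$-eigenvector of the induced subgraph $G[B\cup\{u\}]$; these do not patch into a $\lambda$-eigenvector of $G$ supported on $B\cup\partial B$, because they ignore the edges inside $\partial B$ and the edges from $\partial B$ to vertices at distance two from $B$. Moreover, even if such a global eigenvector existed, its support would be allowed to meet $\partial B$, so it would only witness $\lambda\in\mathrm{spec}(G[B\cup\partial B])$ and yields no contradiction with $\lambda\notin\mathrm{spec}(G[B])$. In the exchange route, the obstruction you flag as ``the technical heart'' is genuinely nontrivial and cannot be waved away: if every $z\in C\setminus V(D)$ has $Pe_z$ in the hyperplane $\mathrm{span}\{Pe_v: v\in X\setminus\{y\}\}$, then a nonzero $w\in\mathcal{E}(\lambda)$ orthogonal to that hyperplane vanishes off $V(D)\cup\{y\}$, i.e.\ the obstruction is \emph{equivalent} to the existence of a global $\lambda$-eigenvector supported on $V(D)\cup\{y\}$ --- something that can perfectly well happen for a particular choice of $C$ and $y$. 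Excluding it requires an additional extremal argument (e.g.\ choosing the star complement so that the component containing $K$ is maximum, and then analysing what such a supported eigenvector forces for every neighbour $y$ of that component), which is precisely the content of the textbook proof and is absent here. As it stands the proposal is a correct plan with an honest but unfilled hole at its decisive step, so it does not yet constitute a proof.
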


For more background information on star partitions, we refer the reader to \cite{cvet}.

\section{Multiplicity of the adjacency eigenvalues}\label{sec:results}

Our first result provides a sharp upper bound for the domination number and for the total domination number of $G$ in terms of the order of $G$ and the multiplicity of its adjacency eigenvalues. 
In order to show it, we will use a characterization of graphs having $rank(G)=2$ or $rank(G)=3$ which appears, among others, in \cite[Theorem 2]{chang}.



\begin{thm}\label{thm:first}
Let $G$ be a connected graph of order $n$, and let $\lambda$ be an eigenvalue of $G$ with multiplicity $m_G(\lambda)$. Then $$\gamma(G) \leq n-m_G(\lambda)$$ and  $\gamma(G)=n-m_G(\lambda)$ if and only if $$(\lambda,G)\in \{(0,K_1),(1,K_2),(-1,K_n),(0,K_{r,s}):\, r,s\geq 2\}.$$

Moreover if $G\neq K_n$, then $\gamma_t(G)\leq n-m_G(\lambda)$.
\end{thm}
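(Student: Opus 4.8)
The plan is to prove the inequality through a single star-set argument valid for every $\lambda$ at once. Fix a star set $X$ for $\lambda$, so $|X|=m_G(\lambda)$ and $\bar{X}=V(G)\setminus X$ has size $n-m_G(\lambda)$; by definition of a star complement the principal submatrix $B[\bar{X}]$ of $B:=A(G)-\lambda I$ is nonsingular. Since $|\bar{X}|=n-m_G(\lambda)=\mathrm{rank}(B)$ and $B[\bar{X}]$ is nonsingular, the rows of $B$ indexed by $\bar{X}$ are independent and hence form a basis of the row space of $B$. For $v\in X$ write $B_v=\sum_{u\in\bar{X}}c_uB_u$; if $v$ had no neighbour in $\bar{X}$, then every $\bar{X}$-entry of $B_v$ equals $A_{vu}=0$, so $c^{\top}B[\bar{X}]=0$ and therefore $c=0$, forcing $B_v=0$. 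But $B_v=0$ says the $v$-th row of $A$ is $\lambda$ in position $v$ and $0$ elsewhere; as $A$ has zero diagonal this is impossible when $\lambda\neq0$, and when $\lambda=0$ it makes $v$ isolated, contradicting connectedness for $n\geq2$. Thus every vertex of $X$ has a neighbour in $\bar{X}$, i.e.\ $\bar{X}$ dominates $G$, and $\gamma(G)\leq|\bar{X}|=n-m_G(\lambda)$. For $\lambda\neq0$ this recovers Lemma~\ref{dom}($i$); the genuinely new content is the case $\lambda=0$, namely $\gamma(G)\leq\mathrm{rank}(G)$. (The degenerate case $K_1$ is checked by hand.)

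For the total-domination bound I would replace $\bar{X}$ by a \emph{connected} star complement. Whenever $G\neq K_n$ no eigenvalue has multiplicity $n-1$ (a multiplicity-$(n-1)$ eigenvalue would leave at most two distinct eigenvalues, forcing $G=K_n$), so $m_G(\lambda)\leq n-2$ and $|\bar{X}|=n-m_G(\lambda)\geq2$; likewise $\mathrm{rank}(G)\geq2$ for any connected graph on $\geq2$ vertices. Seeding Lemma~\ref{con} with a connected $\lambda$-free induced subgraph (a single vertex when $\lambda\neq0$, an edge when $\lambda=0$) produces a connected star complement $\bar{X}$ of this same size $\geq2$. Domination (first paragraph) gives every vertex of $X$ a neighbour in $\bar{X}$, while connectedness of $G[\bar{X}]$ with $|\bar{X}|\geq2$ gives every vertex of $\bar{X}$ a neighbour in $\bar{X}$. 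Hence $\bar{X}$ is a total dominating set and $\gamma_t(G)\leq n-m_G(\lambda)$.

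For the equality characterisation the direction $(\Leftarrow)$ is a direct computation of $\gamma$ and $m_G(\lambda)$ on each listed pair. For $(\Rightarrow)$, assume $\gamma(G)=n-m_G(\lambda)$, so $\bar{X}$ is a minimum dominating set and $m_G(\lambda)=n-\gamma(G)$ is forced to be large. If $\gamma(G)=1$ then $m_G(\lambda)=n-1$, giving only two distinct eigenvalues and hence $G=K_n$ with $\lambda=-1$ (with the extra pair $(1,K_2)$ appearing when $n=2$). If $\gamma(G)=2$ and $\lambda=0$ then $\mathrm{rank}(G)=2$, so the rank-$2$ classification of \cite{chang} forces $G=K_{r,s}$, and $\gamma=2$ then forces $r,s\geq2$.

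The remaining configurations are where the real work lies, and I expect the strictness to be the main obstacle. Two situations must be excluded: $\gamma(G)=2$ with $\lambda\neq0$, and $\gamma(G)\geq3$. In the first, $A-\lambda I$ has rank $2$ but constant nonzero diagonal $-\lambda$, a rigid structure that I would play against the adjacency pattern forced by $\gamma=2$, using the rank-$3$ classification of \cite{chang} applied to $A-\lambda I$; the aim is to show such a graph collapses to a listed one or cannot exist. In the second, for $\lambda=0$ one must rule out $\gamma(G)=\mathrm{rank}(G)\geq3$: the value $\mathrm{rank}(G)=3$ is dispatched by verifying from the rank-$3$ classification that every connected graph of rank $3$ has $\gamma\leq2$, whereas $\mathrm{rank}(G)\geq4$ needs a separate strictness argument showing that a \emph{connected} minimum dominating set inducing a nonsingular subgraph cannot have size exactly equal to the rank. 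I would attack this last step by exploiting that the connected star complement is simultaneously a minimum dominating set and, for $\lambda\notin\{0,-1\}$, a location-dominating set (Lemma~\ref{dom}($ii$)); the requirement that the $m_G(\lambda)$ outside vertices all have distinct nonempty $\bar{X}$-neighbourhoods while $\bar{X}$ remains minimum over-determines the neighbourhood structure, which I expect to be reconcilable only for the graphs on the list.
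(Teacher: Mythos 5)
Your first two paragraphs are correct and complete: the linear-algebra argument that every row of $A-\lambda I$ indexed by $X$ lies in the span of the rows indexed by $\bar{X}$, so that a vertex of $X$ with no neighbour in $\bar{X}$ would force a zero row, is a clean uniform proof of domination for all $\lambda$ (the paper instead quotes Lemma~\ref{dom}($i$) for $\lambda\neq 0$ and runs a separate rank argument for $\lambda=0$, so your treatment of the inequality is arguably tidier), and your passage to a connected star complement via Lemma~\ref{con} for the $\gamma_t$ bound matches the paper's.

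The gap is the forward direction of the equality characterization, which is where the paper spends essentially all of its effort and which you explicitly defer (``the remaining configurations are where the real work lies''). The missing idea is this: if $\gamma(G)=n-m_G(\lambda)$ and $\bar{X}=V(H)$ is a connected star complement (hence a \emph{minimum} dominating set), then for each $v_i\in V(H)$ minimality produces an external private neighbour $x_i$ with $N_H(x_i)=\{v_i\}$; writing the principal submatrix of $A-\lambda I$ on $V(H)\cup\{x_1,\dots,x_\gamma\}$ in block form $\left(\begin{smallmatrix} B-\lambda I & I\\ I & C-\lambda I\end{smallmatrix}\right)$ and using $\mathrm{rank}(A-\lambda I)=\gamma$ forces $(B-\lambda I)(C-\lambda I)=I$, whose $(1,1)$-entry reads $\lambda^2+\sum_i b_{1i}c_{i1}=1$ and hence pins $\lambda$ to $\{-1,0,1\}$. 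That one computation collapses the whole problem to three concrete eigenvalues, after which $\lambda=-1$ and $\lambda=0$ are dispatched quickly and $\lambda=1$ needs a finite (if fiddly) interlacing/eigenvector analysis. Your plan has no substitute for this step: you case-split on the value of $\gamma$ rather than on $\lambda$, so nothing in the sketch excludes, say, an irrational $\lambda$ with $\gamma(G)=4$; and the tool you propose for $\gamma=2$, $\lambda\neq 0$ --- the rank classification of \cite{chang} ``applied to $A-\lambda I$'' --- does not apply, since that classification is for adjacency matrices with zero diagonal, not for matrices with constant diagonal $-\lambda\neq 0$. As written, the ``only if'' half of the characterization is unproved.
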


\begin{proof}
If $G=K_n$, then $\gamma(G) =1$ and so $\gamma(G) \leq n-m_G(\lambda)$ for any eigenvalue $\lambda$ of $G$ with multiplicity $m_G(\lambda)$. If $G\neq K_n$, by Lemma \ref{con}, there exists a star set $X$ for $\lambda$ such that the induced subgraph on $\bar{X}$, say $H$, is connected. If $\lambda\neq 0$, then by  Lemma \ref{dom}$(i)$, $V(H)$ is a dominating set for $G$. If $\lambda=0$, then again $V(H)$ is a dominating set for $G$ because if $x\notin V(H)$, $N_H(x)=\emptyset$ and $y\in N(x)$, then the rank of the induced subgraph on the set $V(H)\cup \{x,y\}$ is larger than $rank(G)$, a contradiction. So $\gamma(G)\leq \gamma_t(G) \leq n-m_G(\lambda)$. \\
Suppose now that $\gamma(G) = n-m_G(\lambda)$. For the sake of notation simplicity, we denote $\gamma(G)=\gamma$. Assume that $V(H)=\{v_1,  \ldots, v_{\gamma} \}$. Then, for every $i=1, \ldots, \gamma$, the set $V(H)\setminus \{v_i\}$ is not a dominating set. So there exists at least one vertex $x_i \in V(G)\setminus V(H)$ such that $N_H(x_i)=\{v_i\}$. Suppose that $L$ is the induced subgraph on $\{x_1, \ldots\, ,x_{\gamma}\}$, see Figure \ref{tatabogh}.
\begin{figure}[h]
\centering
\includegraphics[width=40mm]{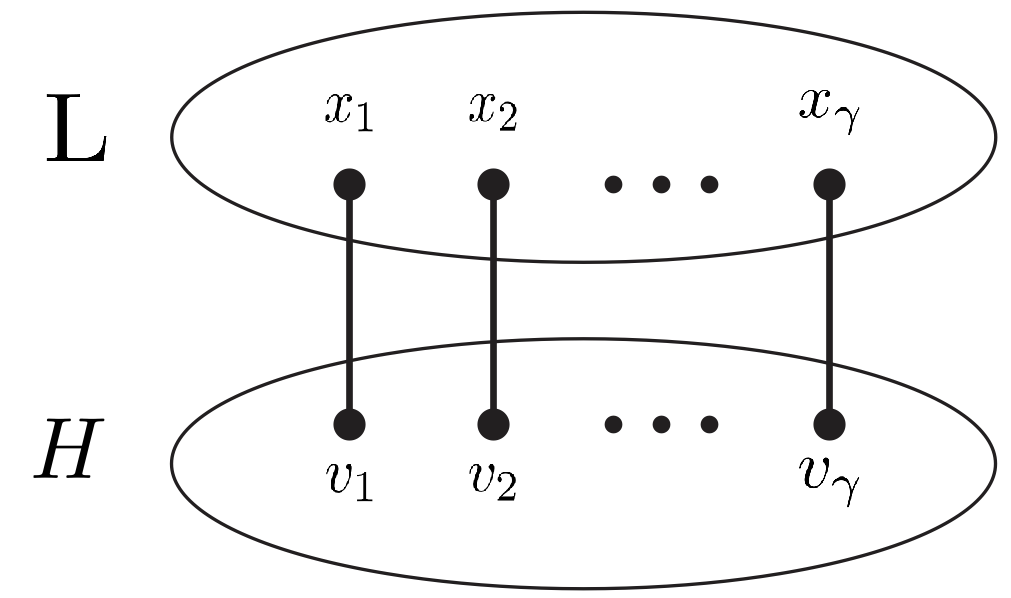}
\caption{}\label{tatabogh}
\end{figure}\\
Suppose that $B$ and $C$ are the adjacency matrices of $H$ and $L$, respectively. So the adjacency matrix of the graph shown in Figure \ref{tatabogh} is as follows:
\begin{center}
$$M= \begin{pmatrix}
B&I\\
I&C
\end{pmatrix}.$$
\end{center}
Next, if we multiply the second row block of $M-\lambda I$ by $-(B-\lambda I)$ and add it to the first row block of $M-\lambda I$, we obtain the following matrix:
\begin{center}
$$M'=\begin{pmatrix}
0&I-(B-\lambda I)(C-\lambda I)\\
I&C-\lambda I
\end{pmatrix}.$$
\end{center}
Note that $rank(A-\lambda I)=\gamma$ implies that $(B-\lambda I)(C-\lambda I)=I$. Note that $B$ and $C$ are $(0,1)$-matrices. Suppose that $B= [b_{ij}]$ and $C=[c_{ij}]$, and consider the $(1,1)$-entry of $(B-\lambda I)(C-\lambda I)=I$, that is,
$\lambda^2+ \sum_{i=1}^{\gamma} b_{1i}c_{i1}=1.$
Thus $\lambda^2 \in \{0,1\}$ and so $\lambda \in \{-1,0,1\}$.
\vspace{0.5\baselineskip}\\
First, suppose that $\lambda=-1$. If $\gamma\geq 2$, since $H$ is connected, there exist $p,q$ such that $1\leq p,q \leq \gamma$ and $b_{pq}=1$. Calculating the $(p,q)$-entry of $(B+ I)(C+ I)=I$ we obtain\\
\hphantom{$(10)$} $ \hspace{32mm}  b_{pq}+ c_{pq} +  \displaystyle\sum_{\substack{i=1\\i\neq p,q}}^{\gamma} b_{pi}c_{iq}=0.$ \\
Since the left side is positive, we get a contradiction. Thus $\gamma=1$ and $m_G(\lambda)=n-1$. Hence $G$ has exactly two distinct eigenvalues and so $G=K_n$.
\vspace{0.5\baselineskip}\\
Now, suppose that $\lambda=0$. This implies that $BC=I$. If $v_1v_i,\, v_1v_j \in E(G)$ and $1\leq i<j\leq \gamma$, then looking at the first row of $BC=I$, one can see that the $i$th or $j$th row of $C$ is zero. Therefore $C$ is singular, a contradiction. Therefore $d_H(v_1)\leq 1$. Since $H$ is connected, $d_H(v_1)=1$. Similarly, for $t=2,\ldots,\gamma$, $d_H(v_t)=1$. Thus $\gamma=2$ and $m_G(0)=n-2$. By \cite[Theorem 2]{chang}, it follows that $G=K_{r,s}$ for some integers $r$ and $s$. Note that $\gamma(K_{1,n-1})=1$, so it follows that $r,s\geq2$.  
\vspace{0.5\baselineskip}\\
Now, suppose that $\lambda=1$ and $\gamma>1$. This implies $(B-I)(C-I)=I$ and so $BC=B+C$. Since $C-I$ is invertible, $L$ is a star complement for $G$ corresponding to $\lambda = 1$. By Lemma \ref{dom}$(ii)$, both $V(H)$ and $V(L)$ are location-dominating sets for $G$. So if $u\in V(G)\setminus (V(H)\cup V(L))$, then $\left| N_H(u)\right|,\,\left| N_L(u)\right| \geq 2$. Note that if $v_iv_j\in E(G)$ for some $1\leq i<j\leq \gamma$, then by calculating the $(i,i)$-entry of $(B-I)(C-I)=I$, one can see that $x_ix_j\notin E(G)$.
 Since $H$ is connected, $H$ has at least one edge. Without loss of generality, we can assume that $v_1v_2 \in E(G)$. Hence $b_{12}=1$ and so $(BC)_{12}=(B+C)_{12}=1$. Therefore there exists a unique integer $l$, $3\leq l\leq \gamma$ such that $v_lv_1,\, x_lx_2 \in E(G)$. Thus $d_H(v_1)\geq 2$. Similarly, for $t=2,\ldots,\gamma$, one can see that $d_H(v_t)\geq 2$.
 Note that the set $\{v_1,\, x_2,\, \ldots, \, x_{\gamma}\}$ is a dominating set for $G$. But the set $\{v_1,\, x_3,\, \ldots, \, x_{\gamma}\}$ is not a dominating set for $G$. Hence by Lemma \ref{dom}$(ii)$, there is a vertex $u$ such that $N_L(u)=\{x_1,\,x_2\}$. Also, the set $\{u,\, v_3,\ldots,\,v_{\gamma}\}$ is not a dominating set for $G$. Similarly, there is a vertex $v$ such that $N_H(v)=\{v_1,\,v_2\}$ and by Lemma \ref{dom}$(ii)$, there is no $w\in V(G)\setminus V(H)$ and $w\neq v$ such that $N_H(w)=\{v_1,v_2\}$. We need to consider two cases: 

\vspace{0.3\baselineskip}

\textbf{ Case 1}. Suppose $vx_1,vx_2\notin E(G)$, and let $D$ be the adjacency matrix of the induced subgraph on $V(H)\cup\{x_1,\, x_2,\, v\}$, see also Figure \ref{case12}$(a)$:
\begin{figure}[h]
\centering
\includegraphics[width=80mm]{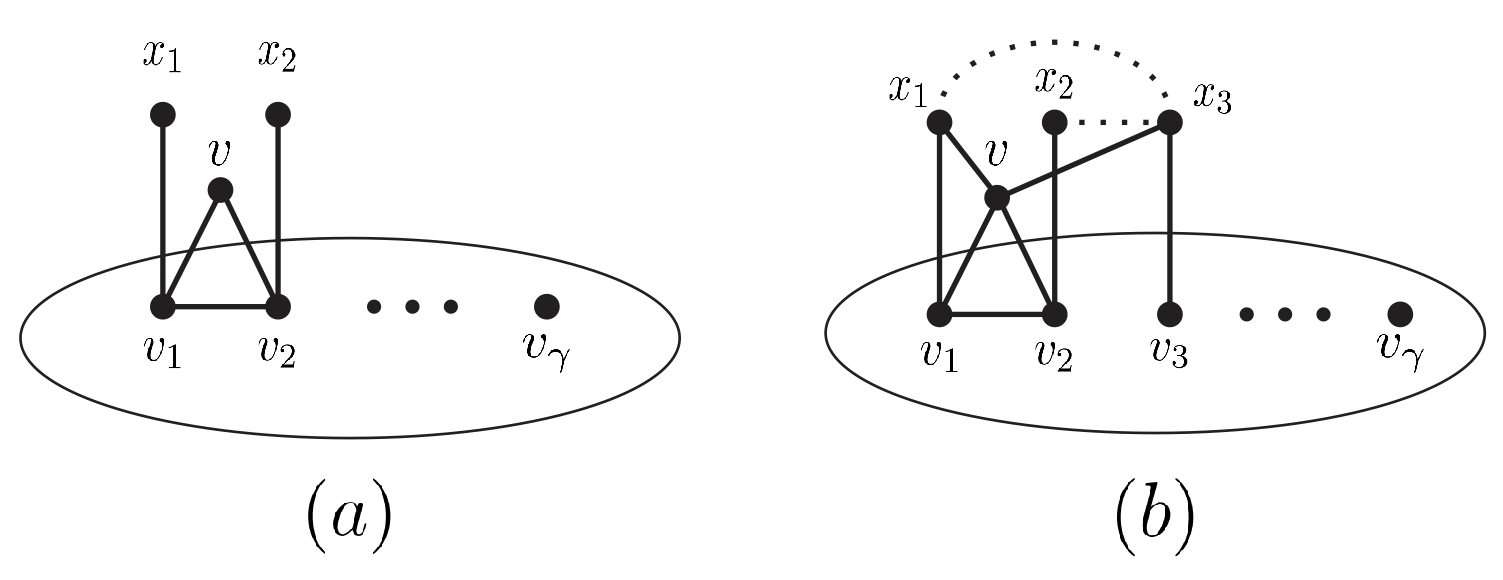}
\caption{}\label{case12}
\end{figure}
\begin{center}
$$ \resizebox{.05\textwidth}{!}{  D  }= \mleft(
\begin{array}{ccccc|ccc}
&&&&&1&0&1\\
&&&&&0&1&1\\
&& \resizebox{.05\textwidth}{!}{  B  }&&&0&0&0\\
&&&&& \vdots&\vdots&\vdots\\
&&&&&0&0&0\\
\hline
1&0&0&\dots &0&0&0&0\\
0&1&0&\dots &0&0&0&0\\
1&1&0&\dots &0&0&0&0
\end{array}
\mright).
$$
\end{center}  
Observe that the last three rows and columns of $D$ correspond to the vertices $x_1,\,x_2$ and $v$, respectively. By the Interlacing Theorem, we know that the multiplicity of each eigenvalue $\lambda$ decreases at most one after removing a vertex. So $m_D(1)=3$. Hence, there is a non-zero eigenvector $z=[z_1,\ldots, z_{\gamma+3}]^\top$ corresponding to the eigenvalue $1$ such that $z_1=z_2=0$. Calculating the last three components of $z$, we see that $z_{\gamma+1}=z_{\gamma+2}=z_{\gamma+3}=0$. Thus, removing the last three components of $z$, we obtained a non-zero eigenvector corresponding to the eigenvalue $1$ for $B$, a contradiction.

\vspace{0.3\baselineskip}

\textbf {Case 2}. Without loss of generality, suppose that $vx_1\in E(G)$. Since $N_L(v)\geq 2$, then there exists at least a vertex $x_i$ such that $vx_i\in E(G)$. If $vx_2\in E(G)$, then by Lemma \ref{dom}$(ii)$ we know that $V(H)$ is a location-dominating set for $G$, so the set $\{v,\,v_3,\ldots,\,v_{\gamma}\}$ is a dominating set, a contradiction. Thus $vx_2\notin E(G)$. Without loss of generality, assume that $vx_3\in E(G)$. Let $D$ be the adjacency matrix of the induced subgraph on $V(H)\cup\{x_1,\, x_2,\, x_3,\, v\}$, see also Figure \ref{case12}$(b)$:
\begin{center}
$$ \resizebox{.05\textwidth}{!}{  D  }= \mleft(
\begin{array}{cccccc|cccc}
&&&&&&1&0&0&1\\
&&&&&&0&1&0&1\\
&&&&&&0&0&1&0\\
&&& \resizebox{.05\textwidth}{!}{  B  }&&&0&0&0&0\\
&&&&&& \vdots&\vdots&\vdots&\vdots\\
&&&&&&0&0&0&0\\
\hline
1&0&0&0&\dots &0&0&0&a&1\\
0&1&0&0&\dots &0&0&0&b&0\\
0&0&1&0&\dots &0&a&b&0&1\\
1&1&0&0&\dots &0&1&0&1&0
\end{array}
\mright),
$$
\end{center} 
where the last four rows and columns of $D$ are corresponding to $x_1,\,x_2,\,x_3$ and $v$, respectively and $a,b\in \{0,\,1\}$. Again, using  Interlacing, it follows that $m_D(1)=4$. Thus there is a non-zero eigenvector $z=[z_1,\ldots ,z_{\gamma+4}]^\top$ corresponding to the eigenvalue $1$ such that $z_1=z_2=z_3=0$. Calculating the last four components of $z$ we obtain that the following equations must hold:
\vspace{0.3\baselineskip}

$\begin{cases}
z_{\gamma+1}=az_{\gamma+3}+z_{\gamma+4} \\
z_{\gamma+2}=bz_{\gamma+3}\\
z_{\gamma+3}=az_{\gamma+1}+bz_{\gamma+2}+z_{\gamma+4}\\
z_{\gamma+4}=z_{\gamma+1}+z_{\gamma+3}.
\end{cases}$ 
\vspace{0.3\baselineskip}

From a straightforward calculation, we obtain that $z_{\gamma+1}=z_{\gamma+2}=z_{\gamma+3}=z_{\gamma+4}=0$. Thus, removing the last four components of $z$ we obtained a non-zero eigenvector corresponding to the eigenvalue $1$ for $B$, a contradiction.\\
Therefore $\gamma=1$ and $m_G(1)=n-1$. So $n=2$ and $G= K_2$. 
\end{proof}

As a direct consequence of Theorem \ref{thm:first} we obtain the following result, which was shown for general graphs by Wang, Yan, Fang, Geng and Tian \cite[Theorem 4.5]{wyfgt}.

\begin{cor}
Let $G$ be a connected regular graph of order $n$, and let $\lambda$ be a Laplacian eigenvalue of $G$ with multiplicity $m_G(\lambda)$. Then $\gamma(G) \leq n-m_G(\lambda)$.
\end{cor}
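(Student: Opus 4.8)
The plan is to reduce this Laplacian statement to the adjacency statement already established in Theorem~\ref{thm:first}, exploiting the fact that for a regular graph the two spectra differ only by an affine shift. The whole argument hinges on a single elementary observation, so I do not expect a substantial obstacle.

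First I would recall that if $G$ is $k$-regular, then its Laplacian matrix satisfies $L(G)=kI-A(G)$. Consequently $A(G)$ and $L(G)$ have exactly the same eigenvectors, and a real number $\lambda$ is a Laplacian eigenvalue of $G$ if and only if $\mu:=k-\lambda$ is an adjacency eigenvalue of $G$. In particular the corresponding eigenspaces coincide, so the Laplacian multiplicity of $\lambda$ equals the adjacency multiplicity of $\mu$.

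Given a Laplacian eigenvalue $\lambda$ of multiplicity $m_G(\lambda)$, I would then set $\mu=k-\lambda$ and apply Theorem~\ref{thm:first} to the adjacency eigenvalue $\mu$ of the connected graph $G$. Writing $m_G^A(\mu)$ for the adjacency multiplicity of $\mu$, the theorem yields $\gamma(G)\le n-m_G^A(\mu)$, and since $m_G^A(\mu)=m_G(\lambda)$ by the previous paragraph this is precisely $\gamma(G)\le n-m_G(\lambda)$. The only step deserving care is the equality of multiplicities, which is immediate because $L(G)$ is an affine function of $A(G)$ when $G$ is regular; once this is noted, Theorem~\ref{thm:first} applies verbatim.
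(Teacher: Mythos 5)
Your argument is correct and is exactly the reduction the paper intends: the corollary is stated there as a direct consequence of Theorem~\ref{thm:first}, relying on the fact that for a $k$-regular graph $L(G)=kI-A(G)$, so each Laplacian eigenvalue $\lambda$ corresponds to the adjacency eigenvalue $k-\lambda$ with the same multiplicity. Nothing further is needed.
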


Note that Theorem \ref{thm:first} characterizes the graphs $G$ in which equality occurs for the new spectral bound of $\gamma(G)$. Our second main result provides a complete characterization of the equality case for the bound of $\gamma_t(G)$ when $ \lambda\notin \mathbb{Q}$ or $\lambda=0$. If $\lambda \in \mathbb{Q}$, then obviously $\lambda$ must be an integer. In this case, $\lambda \in \{\ldots, -3,-2,-1,0,1\}$.

\begin{thm}\label{thm:second}
Suppose that $G$ is a connected graph of order $n$ and $\lambda$ is an eigenvalue of $G$ with multiplicity $m_G(\lambda)$. Then the following hold:
\begin{description}
\item[$(i)$]  If $\lambda\notin \mathbb{Q}$, then  $\gamma_t(G)=n-m_G(\lambda)$ if and only if $(\lambda, G)=(\frac{-1\pm \sqrt{5}}{2}, C_5)$ or $(\lambda, G)=(\pm \sqrt{2}, K_{1,2})$.
\item[$(ii)$] If $\lambda\in \mathbb{Q}$, then $\lambda$ is an integer and is at most $1$.
\end{description}
Moreover, if $\lambda=0$, then $\gamma_t(G)=n-m_G(\lambda)$ if and only if $G= K_{r,s}$ for some positive integers $r$ and $s$.
\end{thm}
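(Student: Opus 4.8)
The plan is to follow the template of Theorem~\ref{thm:first}, but with ``dominating'' replaced by ``total dominating'', and to exploit that in the equality case the connected star complement is forced to be a \emph{minimum} total dominating set. Since the bound $\gamma_t(G)\le n-m_G(\lambda)$ is only established for $G\neq K_n$, I would work under that hypothesis. By Lemma~\ref{con} fix a star set $X$ for $\lambda$ whose star complement $H=G[\bar X]$ is connected; the argument of Theorem~\ref{thm:first} shows $\bar X=V(H)$ is a total dominating set, and as $|\bar X|=n-m_G(\lambda)=\gamma_t(G)$ it is a \emph{minimum} one. Minimality gives, for each $v_i\in V(H)$, a private neighbour $w_i$ with $N(w_i)\cap V(H)=\{v_i\}$; such a $w_i$ is either \emph{external} (a vertex of $X$ adjacent to $\bar X$ only at $v_i$) or \emph{internal} (a pendant vertex of $H$ adjacent to $v_i$).

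The algebraic engine is the star-complement identity coming from the Schur complement of $A-\lambda I$. Writing $A=\left(\begin{smallmatrix}A_X&R\\R^\top&B\end{smallmatrix}\right)$ with $B=A(H)$ and $R$ the $X$--$\bar X$ adjacency, the invertibility of $B-\lambda I$ together with $\operatorname{rank}(A-\lambda I)=|\bar X|$ forces
\[
A_X-\lambda I=R\,(B-\lambda I)^{-1}R^{\top}.
\]
At an external private neighbour $w_i$, whose row of $R$ is the unit vector $e_i$, the diagonal of this identity reads $[(B-\lambda I)^{-1}]_{ii}=-\lambda$, equivalently $\phi_{H-v_i}(\lambda)=\lambda\,\phi_{H}(\lambda)$ in terms of characteristic polynomials. (Equivalently, as in Theorem~\ref{thm:first}, one may restrict to the induced subgraph on $V(H)$ together with the external private neighbours and use the Interlacing Theorem to compute the multiplicity of $\lambda$ there, recovering a relation of the same flavour as $(B-\lambda I)(C-\lambda I)=I$.)

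For part~$(ii)$, rationality makes $\lambda$ a rational algebraic integer, hence an integer, and I would extract $\lambda\le 1$ from the nonnegativity of the $(0,1)$-blocks exactly as the bound $\lambda^2\le 1$ was obtained in Theorem~\ref{thm:first}. For part~$(i)$ the new ingredient is that, since $G$ has integral characteristic polynomial, every algebraic conjugate of an irrational $\lambda$ is an eigenvalue of the same multiplicity $m_G(\lambda)$; if $\lambda$ has degree $d\ge 2$ this orbit contributes $d\,m_G(\lambda)$ eigenvalues, so $(d-1)\,m_G(\lambda)\le\gamma_t(G)$. This constraint, fed into the identity $[(B-\lambda I)^{-1}]_{ii}=-\lambda$ for an irrational $\lambda$, leaves only a short list of connected star complements to test, which I expect to single out $H=P_3$ with $G=C_5$ and $H=K_2$ with $G=K_{1,2}$. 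Finally, for $\lambda=0$ the identity becomes $A_X=R\,B^{-1}R^{\top}$; vanishing of the diagonal of the $(0,1)$-matrix $A_X$ gives $\mathbf r_u^{\top}B^{-1}\mathbf r_u=0$ for all $u\in X$ (where $\mathbf r_u$ is the row of $R$ at $u$), which at an external private neighbour says $[B^{-1}]_{ii}=0$, i.e.\ $H-v_i$ is singular. Combined with the connectedness and absence of isolated vertices in $H$, this should force $\operatorname{rank}(G)=\gamma_t(G)=2$ and hence $G=K_{r,s}$ by the rank-two characterisation of \cite{chang}; the converse is the direct check $\gamma_t(K_{r,s})=2=\operatorname{rank}(K_{r,s})$.

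The step I expect to be the main obstacle is the internal/external dichotomy of private neighbours: external private neighbours feed cleanly into the diagonal identity, whereas internal ones are pendant vertices of $H$ whose effect must be tracked through the structure of $B=A(H)$ itself. This bookkeeping, rather than any single computation, is what drives the case analysis and is most delicate when pinning down the exact irrational pairs of part~$(i)$.
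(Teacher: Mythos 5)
Your proposal correctly identifies the skeleton of the argument (the connected star complement from Lemma~\ref{con} is a \emph{minimum} total dominating set, each $v_i$ has a private neighbour that is either an external vertex of $X$ or an internal pendant neighbour in $H$, and the Schur-complement identity $A_X-\lambda I=R(B-\lambda I)^{-1}R^\top$ is the algebraic constraint). But it stops exactly where the real work begins. The step you flag as ``the main obstacle'' --- tracking the internal private neighbours, which do not feed into your diagonal identity at all --- is not an obstacle to be mentioned but the core of the proof, and you leave it unresolved. The paper handles it by partitioning the relevant vertices into five sets $V_1,\dots,V_5$ ($V_1$ the vertices whose only private neighbours are internal, $V_2$ their pendant neighbours in $H$, $V_3$ the rest of $V(H)$, and $V_4,V_5$ external private neighbours of $V_2,V_3$), writing the block matrix $P-\lambda I$ on $V_1\cup\cdots\cup V_5$, and deriving four matrix equations from the fact that the rows indexed by $V_4\cup V_5$ lie in the span of those indexed by $V(H)$. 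All three conclusions --- $\lambda\le 1$ for integer $\lambda$, the classification $(\lambda,G)\in\{(\frac{-1\pm\sqrt5}{2},C_5),(\pm\sqrt2,K_{1,2})\}$ for irrational $\lambda$, and $G=K_{r,s}$ for $\lambda=0$ --- are read off from the $(1,1)$-entries and nonnegativity of these equations; your sketch replaces each of these deductions with ``I expect'' or ``should force''.

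Two concrete failures beyond the missing bookkeeping. First, your finiteness argument for part~$(i)$ is circular in order: the conjugate-multiplicity count $(d-1)\,m_G(\lambda)\le\gamma_t(G)$ gives $d(n-\gamma_t)\le n$, which for $d=2$ only says $n\le 2\gamma_t$ and yields no ``short list'' of star complements until you have \emph{first} proved $\gamma_t\le 3$; the paper obtains $\gamma_t\in\{2,3\}$ from the matrix equations (forcing $B_1=B_4=0$, $B_{13}B_{13}^\top=I$, etc., then invoking connectivity of $H$) and only afterwards applies the conjugate count to get $n\le 5$ or $6$. Second, your $\lambda=0$ case reasons only from external private neighbours and concludes $G=K_{r,s}$ via the rank-two characterisation, but the internal case (some $v_i$ has no external private neighbour, only a pendant neighbour in $H$) is exactly what produces the stars $K_{1,n-1}$ in the statement; an argument that ignores it cannot recover the full equality class $r,s\ge 1$. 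Your part~$(ii)$ likewise cannot be done ``exactly as in Theorem~\ref{thm:first}'', because the identity $(B-\lambda I)(C-\lambda I)=I$ used there presupposes that \emph{every} $v_i$ has an external private neighbour, which is precisely what fails for total domination.
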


\begin{proof}
For the sake of notation simplicity, we denote $\gamma_t(G)=\gamma$. 
From the first part of the proof of Theorem \ref{thm:first}, we know that the star complement $V(H)$ of the eigenvalue $\lambda$ is a total dominating set with $|V(H)|=\gamma_t(G)$ and the induced subgraph $H$ is connected. Therefore, for every $i=1,\ldots,\gamma$, the set $V(H)\setminus \{v_i\}$ is not a total dominating set. So there is at least one vertex $x_i$ such that $N_H(x_i)=\{v_i\}$. Assume that, for an integer $t$ with $0\leq t\leq \frac{\gamma}{2}$ and  $i=t+1,\ldots ,\gamma$, there is at least one vertex $x_i\notin V(H)$ such that $N_H(x_i)=\{v_i\}$. Assume also  that for $i=1,\ldots, t$, there is no vertex $u\notin V(H)$, $N_H(u)=\{v_i\}$, but there is at least one vertex $v_{j_i}\in V(H)$ such that $N_H(v_{j_i})=\{v_i\}$. Without loss of generality, suppose that $j_i=t+i$. Let $V_1=\{v_1,\ldots,v_t\}$, $V_2=\{v_{t+1},\ldots, v_{2t}\}$, $V_3=\{v_{2t+1},\ldots,v_{\gamma}\}$, $V_4=\{x_{t+1},\ldots, v_{2t}\}$ and $V_5=\{x_{2t+1},\ldots,x_{\gamma}\}$, see Figure \ref{fiv}.
\begin{figure}[h!]
\centering
\includegraphics[width=80mm]{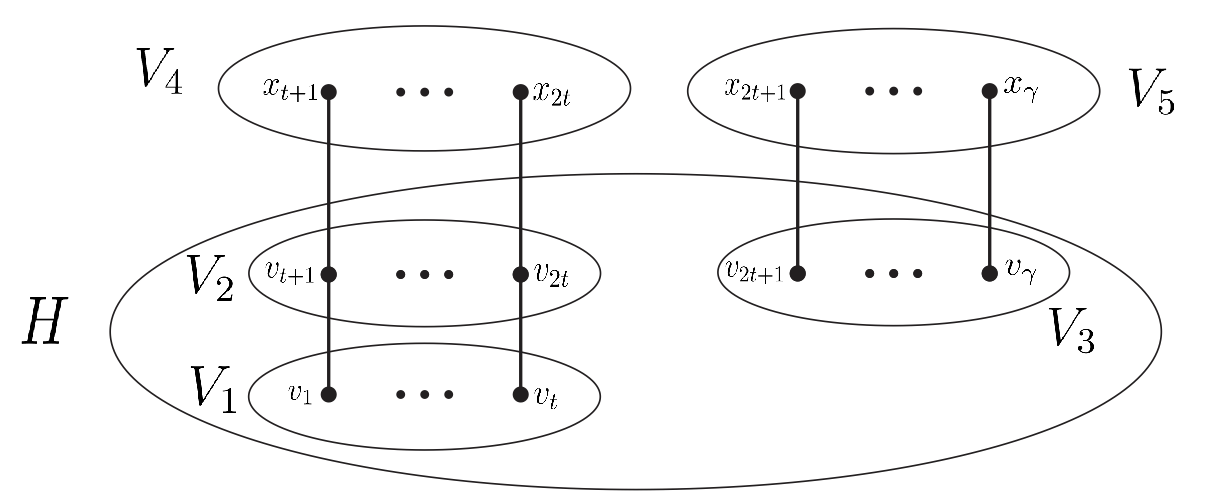}
\caption{}\label{fiv}
\end{figure}
\\
Let $P$ be the adjacency matrix of the graph shown in Figure \ref{fiv}, that is\\

\begin{center}
$$ P-\lambda I = \mleft(
\begin{array}{ccccc}
B_1-\lambda I&I&B_{13}&0&0\\
I&-\lambda I&0&I&0\\
B_{13}^T&0&B_3-\lambda I&0&I\\
0&I&0&B_4-\lambda I&B_{45}\\
0&0&I&B_{45}^T&B_5-\lambda I
\end{array}
\mright),
$$
\end{center}
where $B_1,B_3,B_4$ and $B_5$ are the adjacency matrices of the induced subgraphs on $V_1,V_3$ and $V_5$, respectively, and $B_{13}$ and $B_{45}$ are the incidence matrices of $V_1,V_3$ and $V_4,V_5$, respectively. Since $V(H)$ is a star complement of $G$ corresponding to the eigenvalue $\lambda$, the principal submatrix of $P-\lambda I$ of order $3$ has full rank. So the forth and fifth rows of $P-\lambda I$ are a linear combination of the first three rows of $P-\lambda I$, which implies that the following holds:\\
$(1) \hspace{3.1em} \lambda B_4B_1+(1-\lambda^2)B_1+(1-\lambda^2)B_4+\lambda(\lambda^2-2)I+B_{45}B_{13}^T=0$,\\
$(2) \hspace{3.1em} (1-\lambda^2)B_{13}+\lambda B_4B_{13}+B_{45}B_3-\lambda B_{45}=0$,\\
$(3) \hspace{3.1em} (1-\lambda^2)B_{45}+\lambda B_1B_{45}+B_{13}B_5-\lambda B_{13}=0$,\\
$(4) \hspace{3.1em} \lambda B_{45}^TB_{13}+B_5B_3-\lambda B_5-\lambda B_3+(\lambda^2-1)I=0$.\\

\noindent First, suppose that $\lambda=0$. We can rewrite equations $(1)$ and $(2)$ as follows: \\
$\hspace{38mm} B_1+B_4+B_{45}B_{13}^T=0$,\\
$\hspace{42mm} B_{13}+B_{45}B_3=0$.\\
Since all  matrices are $(0,1)$-matrices, we know that $B_1=B_4=B_{13}=0$. But unless $\gamma-2t=0$ or $t=0$, this contradicts the connectivity of $H$. If $\gamma-2t=0$, then $t=1$. Thus $\gamma=2$ and by \cite[Theorem 2]{chang}, it follows that $G= K_{r,s}$, for some positive integers $r,s$. But since there is no vertex $u\notin V(H)$ such that $N_H(u)=\{v_1\}$, at least one of $r$ or $s$ is $1$. Therefore $G=K_{1,n-1}$. If $t=0$, then $\gamma(G)=n-m_G(\lambda)$. Since $\gamma_t(K_2)=\gamma_t(K_n)=2$ and $\gamma_t(K_1)=\infty$, by Theorem \ref{thm:first}, $(\lambda,G)=(0,K_{r,s})$, for some integers $r,s\geq 2$.

\vspace{0.3\baselineskip}

\noindent Now, suppose that $\lambda \notin \mathbb{Q}$.

\vspace{0.3\baselineskip}

\noindent If $\gamma-2t=0$, then $B_3=B_5=B_{13}=B_{45}=0$. Using Equation $(1)$ we obtain $\lambda B_4B_1+(1-\lambda^2)B_1+(1-\lambda^2)B_4+\lambda(\lambda^2-2)I=0$. By looking at the $(1,1)$-entry of this equation we note that $2-\lambda^2$ must be an integer. So $\lambda \in \{0,\pm 1,\pm\sqrt{2}\}$. Thus $\lambda= \pm \sqrt{2}$. Then $B_1B_4=0$ and so $B_1=B_4=0$, because $B_1$ and $B_4$ are non-negative matrices. Since $H$ is connected, $t=1$ and so $\gamma=2$ and $m_G(\lambda)=n-2\geq 1$. Since $\lambda$ is not an integer, there exists another eigenvalue conjugate to $\lambda$ with multiplicity $n-2$. Thus $2(n-2)\leq n$ and so $3\leq n\leq 4$.  If $n=4$, then $G$ has only two distinct eigenvalues. Therefore it should be a complete graph and so $\lambda$ cannot be its eigenvalue, a contradiction. So $n=3$ and $G=K_{1,2}$.

\vspace{0.3\baselineskip}

\noindent Now, suppose that $t,\gamma-2t\geq 1 $. By looking at the $(1,1)$-entries of equations (1) and (4) we obtain:\\
$(5)  \hspace{3.1em} a\lambda+\lambda(\lambda^2-2)+b=0$,\\
$(6) \hspace{3.1em} c\lambda+d +\lambda^2-1=0$,\\
where $a$, $b$, $c$ and $d$ are the $(1,1)$-entries of $B_4B_1$, $B_{45}B_{13}^T$, $B_{45}^TB_{13}$ and $B_5B_3$, respectively. Using now Equation $(6)$, $\lambda=\dfrac{-c\pm \sqrt{c^2-4d+4}}{2} $ and so $c^2-4d+4\geq 0$. From Equation $(6)$ we also see that $\lambda(\lambda^2-2)=\lambda(c^2-d-1)+cd-c$, and from Equation $(5)$, $\lambda=\dfrac{c-cd-b}{c^2+a-d-1}$. Since $\lambda$ is not a rational number, $c^2+a-d-1=0$ and so $a=d+1-c^2\geq 0$. Therefore $c^2-d-1\leq c^2-4d+4$ and so $d\in \{0,1\}$. If $d=1$, then $\lambda$ is an integer, a contradiction. Therefore $d=0$ and so $c\in \{0,1\}$. Note that $c\neq 0$, since $\lambda$ is not integer. Thus $\lambda=\dfrac{-1\pm\sqrt{5}}{2}$.\\
Now, from Equation ($6$), one can see that $\lambda^2-1=-\lambda$. So equations ($1$) to ($4$) can be rewritten as follows:\\
$(1') \hspace{3.1em} \lambda B_4B_1+\lambda B_1+\lambda B_4-I+B_{45}B_{13}^T=0$,\\
$(2')  \hspace{3.1em} \lambda B_{13}+\lambda B_4B_{13}+B_{45}B_3-\lambda B_{45}=0$,\\
$(3') \hspace{3.1em} \lambda B_{45}+\lambda B_1B_{45}+B_{13}B_5-\lambda B_{13}=0$,\\
$(4') \hspace{3.1em} \lambda B_{45}^TB_{13}+B_5B_3-\lambda B_5-\lambda B_3-\lambda I=0$.

\vspace{0.3\baselineskip}

\noindent Since all matrices are $(0,1)$-matrices, by $(1')$, it is obvious that $B_1=B_4=0$ and so $B_{45}B_{13}^T=I$. Summing up equations $(2')$ and $(3')$ we obtain $B_4B_{13}=B_{45}B_3=B_1B_{45}=B_{13}B_5=0$, and hence $B_{13}=B_{45}$. Therefore $B_{13}B_{13}^T=I$ and so for every $v_i\in V_1$, $d_{V_3}(v_i)=1$. By the  Equation $(4')$, $B_5B_3=\lambda(-B_{45}^TB_{13}+B_5+B_3+I)$. Therefore $B_5B_3=0$ and so we can rewrite $(4')$ as $B_{13}^TB_{13}=B_5+B_3+I$. By multiplying this equation by $B_3$ on the right side, we get $B_3^2+B_3=0$ and so $B_3=0$. Similarly, $B_5=0$ and so $B_{13}^TB_{13}=I$. Therefore for every $v_i\in V_3$, $d_{V_1}(v_i)=1$. Since $H$ is connected, $\gamma-2t=t$ and $t=1$. Therefore $\gamma=3$ and $n\geq 5$. So $m_G(\lambda)=n-3$. Since $\lambda$ is not an integer, there exists a conjugate of $\lambda$ with multiplicity $n-3$. Therefore $2(n-3)\leq n$, which implies $5\leq n\leq 6$. If $n=6$, then since $G$ has only two eigenvalues it should be a complete graph, a contradiction. Thus, $n=5$ and since each set $V_i$, for $i=1,\ldots , 5$, has at least one vertex, it follows that $|V_i|=1$. Suppose that $v_i\in V_i$, for $i=1,\ldots ,5$. Since $H$ is connected, by the definition of sets $V_4$ and $ V_5$, the path $v_4v_2v_1v_3v_5$ is a subgraph of $G$. But the path $P_5$ has five distinct eigenvalues and so $v_4$ and $v_5$ are adjacent. Hence $G=C_5$.

\vspace{0.3\baselineskip}

\noindent Finally, if $\lambda \in \mathbb{Q} $, then it is clear that it must be an integer. Also, since all matrices are $(0,1)$-matrices, by looking at the $(1,1)$-entry of Equation $(4)$, it is easy to see that $\lambda\leq 1$, which implies that $\lambda$ must be an integer of value at most $1$. 

\vspace{0.3\baselineskip}

\noindent 
\end{proof}

\section{Star sets and $p$-domination number}
In this section we show several bounds for the $p$-domination number of a graph using the multiplicity of any eigenvalue of $G$. In particular, first we prove a new relation between star complements and $p$-dominating sets (Theorem \ref{gene}), which extends Lemma \ref{dom}. As an application of it, we obtain a bound for the $p$-dominating number in terms of the multiplicity of any  adjacency eigenvalue (Corollary \ref{coro:starsetspdomination}). 
In the second part of this section, we show that the results of  Wang, Yan, Fang, Geng and Tian \cite[Section 4]{wyfgt}, who used the Laplacian eigenvalues to provide a bound for the domination number, also hold if one considers the adjacency matrix and the $p$-domination number (Theorem \ref{tok}).

\begin{thm}\label{gene}
Let $G$ be a graph of order $n$, with minimum degree $\delta(G)$ and having $s$ distinct eigenvalues, and let $p$ be a positive integer such that $p\leq \delta(G)$. If $s\geq n-\delta(G)+p$, then for every eigenvalue of $G$, its star complement set is a $p$-dominating set.
\end{thm}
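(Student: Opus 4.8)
The plan is to reduce the $p$-domination property of the star complement to an elementary degree count against the size of the star set. Recall that a star set $X$ for $\lambda$ satisfies $|X| = m_G(\lambda)$, and $\bar{X} = V(G)\setminus X$ is the associated star complement. By the definition of $p$-domination, $\bar{X}$ is $p$-dominating precisely when every vertex $v\in X$ has at least $p$ neighbours in $\bar{X}$. So I would fix an arbitrary vertex $v\in X$ and split its neighbourhood into the part lying inside $X$ and the part lying in $\bar{X}$. Since $v\in X$ and $G$ is simple, $v$ has at most $|X|-1 = m_G(\lambda)-1$ neighbours inside $X$, whence
$$
|N_{\bar{X}}(v)| \;=\; d_G(v) - |N_X(v)| \;\geq\; \delta(G) - \bigl(m_G(\lambda)-1\bigr).
$$
Consequently it suffices to prove $m_G(\lambda)\leq \delta(G)-p+1$, for then $|N_{\bar{X}}(v)|\geq p$ for every $v\in X$, and $\bar{X}$ is $p$-dominating.

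The remaining step is to convert the hypothesis on the number of distinct eigenvalues into this bound on the multiplicity. Writing the distinct eigenvalues of $G$ as $\lambda_1,\dots,\lambda_s$ with multiplicities $m_1,\dots,m_s$, we have $\sum_{i=1}^{s} m_i = n$ and each $m_i\geq 1$; therefore, for the fixed eigenvalue $\lambda = \lambda_j$,
$$
m_G(\lambda) \;=\; n - \sum_{i\neq j} m_i \;\leq\; n-(s-1) \;=\; n-s+1.
$$
Substituting the assumption $s\geq n-\delta(G)+p$ gives $m_G(\lambda)\leq n-(n-\delta(G)+p)+1 = \delta(G)-p+1$, which is exactly the bound required by the first paragraph. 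Since $\lambda$ was an arbitrary eigenvalue and the argument used nothing specific to it, the conclusion holds for the star complement of every eigenvalue of $G$.

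I do not expect a genuine obstacle here, as the whole argument is a counting bound; the only point that deserves care is the crucial multiplicity estimate $m_G(\lambda)\leq n-s+1$, which is what makes the number of distinct eigenvalues control the largest possible multiplicity, and hence the size of any star set. Two sanity checks on the hypotheses are worth recording in the write-up. First, the assumption $p\leq \delta(G)$ guarantees that the target quantity $\delta(G)-p+1$ is positive, which is necessary for the multiplicity bound to be achievable. Second, $p\leq \delta(G)$ also ensures $n-\delta(G)+p\leq n$, so that the condition $s\geq n-\delta(G)+p$ is compatible with the trivial bound $s\leq n$ and the statement is not vacuous.
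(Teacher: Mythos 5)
Your proof is correct and follows essentially the same route as the paper: bound the multiplicity of each eigenvalue by $\delta(G)-p+1$ using the hypothesis on the number of distinct eigenvalues, then count that each vertex of the star set has at most $m_G(\lambda)-1$ neighbours inside the star set and hence at least $p$ neighbours in the star complement. The only cosmetic difference is that the paper bounds the largest multiplicity $m_s$ via $n=\sum_i m_i\geq s-1+m_s$, whereas you bound $m_G(\lambda)$ directly for the fixed eigenvalue; the two estimates are identical.
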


\begin{proof}
Consider a graph $G$ having distinct eigenvalues  $\{\lambda_1, \ldots, \lambda_s\}$ with multiplicities $m_1 \leq \cdots \leq  m_s$, respectively. Let $X_i$ be a star set for $\lambda_i$, for $i = 1, \ldots, s$. Note that
\begin{center}
 $n=\displaystyle\sum_{i=1}^s m_i\geq  s-1+m_s\geq n-\delta(G)+p-1+m_s$.
\end{center}
 So $m_s\leq \delta(G)-p+1$. Therefore $|X_i|=m_i\leq \delta(G)-p+1$, for $i = 1, \ldots, s$. Hence every vertex of $X_i$ is adjacent
to at least $\delta(G) - (\delta(G) - p) = p$ vertices in $G\setminus X_i$. Hence $G\setminus X_i$ is a $p$-dominating set.
\end{proof}

Note that for $p=1$, Theorem \ref{gene} gives Lemma \ref{dom}.
As a corollary of Theorem \ref{gene} we obtain a new bound for the $p$-dominating number in terms of the multiplicity of any adjacency eigenvalue.

\begin{cor}\label{coro:starsetspdomination}
Let $G$ be a connected graph of order $n$ with $s$ distinct eigenvalues and let $p$ be a positive integer such that $p\leq \delta(G)$. Suppose that $\lambda$ is an adjacency eigenvalue of $G$ with multiplicity $m_G(\lambda)$. If $s\geq n-\delta(G) +p$, then $\gamma_p(G) \leq n-m_G(\lambda)$.
\end{cor}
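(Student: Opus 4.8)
The plan is to deduce the bound directly from Theorem \ref{gene} together with the defining cardinality property of star sets. Recall that if $X$ is a star set for the eigenvalue $\lambda$, then $|X| = m_G(\lambda)$; this is precisely how star sets are normalised throughout the excerpt, and it is the identity $|X_i| = m_i$ that is already used inside the proof of Theorem \ref{gene}. Consequently the star complement $\bar{X} = V(G) \setminus X$ has cardinality $n - m_G(\lambda)$, so the whole argument reduces to exhibiting one $p$-dominating set of this size.

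First I would verify that the standing hypotheses of the corollary match the hypotheses of Theorem \ref{gene} verbatim: $G$ has $s$ distinct eigenvalues, $p$ is a positive integer with $p \leq \delta(G)$, and $s \geq n - \delta(G) + p$. Since connectedness only strengthens the setting, Theorem \ref{gene} applies without modification.

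Next I would invoke Theorem \ref{gene} for the specific eigenvalue $\lambda$: it guarantees that the star complement $\bar{X}$ of $\lambda$ is a $p$-dominating set of $G$. Combining this with the cardinality count $|\bar{X}| = n - m_G(\lambda)$ and the fact that $\gamma_p(G)$ is by definition the minimum size of a $p$-dominating set, I obtain
$$
\gamma_p(G) \;\leq\; |\bar{X}| \;=\; n - m_G(\lambda),
$$
which is the claimed inequality.

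I do not anticipate a genuine obstacle here, since the statement is a direct specialisation of Theorem \ref{gene}: the only substantive point is the bookkeeping identity $|X| = m_G(\lambda)$, which is built into the notion of star set. The one thing worth stating explicitly is that the inequality $s \geq n - \delta(G) + p$ is exactly the condition forcing $m_G(\lambda) = |X| \leq \delta(G) - p + 1$ in the proof of Theorem \ref{gene}, so no further hypothesis on $\lambda$ (such as $\lambda \neq 0$) is needed and the conclusion holds uniformly for every eigenvalue of $G$.
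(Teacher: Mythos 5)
Your proposal is correct and follows exactly the paper's own argument: apply Theorem \ref{gene} to the star complement of $\lambda$ and use $|X| = m_G(\lambda)$ to conclude $\gamma_p(G) \leq |\overline{X}| = n - m_G(\lambda)$. No differences worth noting.
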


\begin{proof}
Assume that $X$ is the star set for eigenvalue $\lambda$ of $G$. By Theorem \ref{gene}, we know that the star complement $\overline{X}$ is a $p$-dominating set. So it follows $\gamma_p(G) \leq |\overline{X}| =n-m_G(\lambda)$.
\end{proof}

Finally, we show that an analogous bound also holds for $\gamma_p$ if one uses the adjacency or Laplacian eigenvalues. For a graph $G$ of order $n$, we call $L(G) = D(G) - A(G)$ the {\it Laplacian matrix} of $G$, where $D(G)$ is the $n \times n$ diagonal matrix with $d_{ii}=d(v_i)$. The eigenvalues of $L(G)$ are called the {\it Laplacian eigenvalues}. Let $G$ be a graph of order $n$. For a subset $S$ of $V(G)$, let $Z(S)$ be the subset of $\mathbb{R}^n$ consisting of all vectors $\alpha$ for which $\alpha_v=0$, for all $v\in S$. Then $Z(S)$ is a subspace of $\mathbb{R}^n$ of dimension $n -|S|$.  
For an eigenvalue $\lambda$ of $G$, we call a subset $S$ of $V(G)$ a $\lambda$-\emph{annihilator} of $G$ if $Z(S) \cap Ker(L(G) -\lambda I_n) =0$, where $0$ refers to the vector subspace of $\mathbb{R}^n$ consisting of only the zero vector. We note that the proof of \cite[Lemma 4.3]{wyfgt} also holds for the adjacency eigenvalues:
\begin{lem}\label{ala}
Let $G$ be a graph with $\lambda$ as an adjacency or Laplacian eigenvalue. If $S$ is a $\lambda$-annihilator of $G$, then $m_G(\lambda) \leq |S|$.
\end{lem}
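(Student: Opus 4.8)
The plan is to reduce the statement to a one-line dimension count, exactly as in the Laplacian case of \cite{wyfgt}. I write $M$ for the matrix under consideration, namely $A(G)$ if $\lambda$ is an adjacency eigenvalue and $L(G)$ if $\lambda$ is a Laplacian eigenvalue; in either case $m_G(\lambda) = \dim Ker(M - \lambda I_n)$, and the notion of $\lambda$-annihilator is read off the same matrix $M$. The only structural input I need is that $Z(S)$ is a genuine linear subspace of $\mathbb{R}^n$ of dimension $n - |S|$, which is immediate since $Z(S)$ is the common zero locus of the $|S|$ coordinate functionals $\alpha \mapsto \alpha_v$, $v \in S$.

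Setting $W := Ker(M - \lambda I_n)$, the annihilator hypothesis is precisely $Z(S) \cap W = 0$. I would then invoke Grassmann's dimension formula
$$\dim(Z(S) + W) = \dim Z(S) + \dim W - \dim\bigl(Z(S) \cap W\bigr) = (n - |S|) + m_G(\lambda),$$
where the second equality uses that the intersection is trivial. Since $Z(S) + W$ is a subspace of $\mathbb{R}^n$, we have $\dim(Z(S) + W) \leq n$, and rearranging yields $m_G(\lambda) \leq |S|$, as claimed.

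I do not anticipate any real obstacle: the argument is pure linear algebra and uses no graph-theoretic structure of $M$ beyond its being an $n \times n$ matrix. The only point worth flagging---and the reason the authors can assert that the Laplacian proof of \cite[Lemma 4.3]{wyfgt} carries over verbatim---is that the eigenspace $Ker(M - \lambda I_n)$ and the multiplicity $m_G(\lambda)$ are both taken with respect to the same matrix $M$, so nothing in the count distinguishes the adjacency from the Laplacian setting.
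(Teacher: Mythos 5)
Your dimension count is correct and complete: since $A(G)$ and $L(G)$ are real symmetric, $m_G(\lambda)=\dim Ker(M-\lambda I_n)$, and Grassmann's formula applied to the trivial intersection $Z(S)\cap Ker(M-\lambda I_n)=0$ gives $(n-|S|)+m_G(\lambda)\leq n$ at once. The paper supplies no proof of its own, merely asserting that the argument of \cite[Lemma 4.3]{wyfgt} carries over to the adjacency case, and your argument is exactly that standard linear-algebra reasoning, so nothing further is needed.
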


The previous bound on the cardinality of a minimum $\lambda$-annihilator of $G$ can be used to establish an upper bound for $\gamma_p$.

\begin{thm}\label{tok}
Let $G$ be a graph of order $n$ and with $p$-domination number $\gamma_p >p$. If $G$ has a minimum $p$-dominating set in which every of its $p$ vertices has an external private neighbor, then for any Laplacian or adjacency eigenvalue $\lambda$ it holds $\gamma_p \leq n -m_G(\lambda)$.
\end{thm}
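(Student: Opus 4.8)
The plan is to exhibit, for the prescribed minimum $p$-dominating set $D$, a $\lambda$-annihilator of the right cardinality and then invoke Lemma~\ref{ala}. Concretely, let $D$ be a minimum $p$-dominating set with $|D|=\gamma_p$ in which every vertex has an external private neighbour, and set $S=V(G)\setminus D$, so that $|S|=n-\gamma_p$. If I can show that $S$ is a $\lambda$-annihilator of $G$ for the chosen matrix (be it $A(G)$ or $L(G)$), then Lemma~\ref{ala} gives $m_G(\lambda)\le |S|=n-\gamma_p$, which is exactly the desired inequality $\gamma_p\le n-m_G(\lambda)$. Thus the whole theorem reduces to the single claim that $Z(S)\cap \mathrm{Ker}(M-\lambda I_n)=0$, where $M\in\{A(G),L(G)\}$.

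To verify this claim I would take an arbitrary $\alpha\in Z(S)\cap \mathrm{Ker}(M-\lambda I_n)$ and show $\alpha=0$. By definition of $Z(S)$ the vector $\alpha$ is supported on $D$, i.e. $\alpha_u=0$ for every $u\notin D$. Writing out the eigen-equation $(M-\lambda I)\alpha=0$ at an external vertex $u\notin D$ is the crucial simplification: for the adjacency matrix it reads $\sum_{w\in N(u)\cap D}\alpha_w=0$, and for the Laplacian the diagonal contribution $d(u)\alpha_u$ vanishes because $\alpha_u=0$, so it reduces to the same scalar equation $\sum_{w\in N(u)\cap D}\alpha_w=0$. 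This is why a single argument should cover both spectra at once: the restriction of $\alpha$ to $D$ is forced to be orthogonal to the $D$-neighbourhood indicator of every vertex outside $D$.

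It remains to use the external-private-neighbour hypothesis to force $\alpha|_D=0$ from these orthogonality relations, and this is the step I expect to be the main obstacle. The feature special to $p\ge 2$ is that no external vertex can have a single neighbour in $D$ (each must have at least $p$), so the naive $p=1$ argument—where an external private neighbour of $v$ produces the equation $\alpha_v=0$ outright—is unavailable and must be replaced by a counting/triangularity argument adapted to the $p$-fold structure. I would order the vertices of $D$ and, using the external private neighbours guaranteed at each vertex together with the internal eigen-equations at the vertices of $D$, argue that the resulting linear system in $\alpha|_D$ admits only the trivial solution; equivalently, that the $\gamma_p\times\gamma_p$ coefficient matrix assembled from these private-neighbour equations is nonsingular. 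The hypothesis $\gamma_p>p$ enters here to guarantee $|D|\ge p+1$, so that the private-neighbour configuration is non-degenerate and the bookkeeping of which equations pin down which coordinates goes through. Establishing this nonsingularity is the technical heart of the proof; once it is in hand, $\alpha=0$, the set $S$ is a $\lambda$-annihilator, and Lemma~\ref{ala} closes the argument simultaneously for the adjacency and the Laplacian eigenvalues.
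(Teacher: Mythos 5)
Your reduction is exactly the paper's: set $S=V(G)\setminus D$, show $S$ is a $\lambda$-annihilator, and invoke Lemma~\ref{ala}; and your observation that the eigen-equation at an external vertex $u$ collapses to $\sum_{w\in N(u)\cap D}\alpha_w=0$ for both $A(G)$ and $L(G)$ is correct and is the same simplification the paper uses. But the proposal stops short of a proof at precisely the point you yourself flag as ``the technical heart'': you never establish that the system of private-neighbour equations forces $\alpha|_D=0$, you only assert that some ordering/nonsingularity argument ought to exist. That is a genuine gap, not a routine verification, and it is compounded by the fact that you have not pinned down what the hypothesis means for $p\ge 2$. As you correctly note, no external vertex of a $p$-dominating set can have a single neighbour in $D$, so ``private neighbour of a vertex'' in the classical sense is vacuous here; the hypothesis has to be read as: every $p$-subset $\{v_1,\dots,v_p\}$ of $D$ has an external private neighbour $u$, i.e.\ a vertex $u\notin D$ with $N(u)\cap D=\{v_1,\dots,v_p\}$. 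Without fixing this reading you cannot even write down the coefficient matrix whose nonsingularity you want.

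The paper closes the gap with a short swap argument rather than a nonsingularity computation. For a $p$-subset $\{v_1,\dots,v_p\}$ with common external private neighbour $u$, the row at $u$ gives $\alpha_{v_1}+\cdots+\alpha_{v_p}=0$. Since $\gamma_p>p$ there is a further vertex $v_{p+1}\in D$, and applying the hypothesis to the swapped subset $\{v_2,\dots,v_p,v_{p+1}\}$ yields $\alpha_{v_2}+\cdots+\alpha_{v_p}+\alpha_{v_{p+1}}=0$, whence $\alpha_{v_{p+1}}=\alpha_{v_1}$; repeating the swap with each $v_i$ shows all of $\alpha_{v_1},\dots,\alpha_{v_{p+1}}$ are equal, and combined with $\alpha_{v_1}+\cdots+\alpha_{v_p}=0$ this common value is $0$. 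Since any vertex of $D$ can be placed in such a $p$-subset, $\alpha|_D=0$. This is where the hypothesis $\gamma_p>p$ is actually used --- to make the swap possible --- not merely, as you suggest, to ensure the configuration is ``non-degenerate.'' Your framework is right, but the one step you defer is the entire content of the theorem.
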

\begin{proof}
Assume that $\lambda$ is an adjacency eigenvalue of $G$. The proof for the Laplacian eigenvalues is analogous. 
Suppose that $X$ is the minimum $p$-dominating set in which every of its $p$ members has an external private neighbor. Put $S= G\setminus X$. By Lemma \ref{ala} it is enough to prove that $S$ is a $\lambda$-annihilator of $G$. Suppose $\alpha\in Z(S) \cap Ker(L(G) -\lambda I_n)$. Then $\alpha_w=0$, for every $w\in S$ and $A(G)\alpha =\lambda \alpha$. For an arbitrary vertex $v\in X$, we claim that $\alpha_v=0$. For this, suppose that $p$ vertices $v_1,\ldots ,v_p \in X$ have an external private neighbor $u$. By calculating the row corresponding to $u$ in $ A(G)\alpha =\lambda \alpha$, we obtain $\alpha_{v_1}+\cdots +\alpha_{v_p}=0$. Since $\gamma_p >p$, suppose that $ v_2,\ldots ,v_p,v_{p+1} \in X$ have an external private neighbor $u'$. Again, by calculating the row corresponding to $u'$ in $ A(G)\alpha =\lambda \alpha$, we obtain $\alpha_{v_2}+\cdots +\alpha_{v_p}+\alpha_{v_{p+1}}=0$. So $\alpha_{v_{p+1}}=\alpha_{v_1}$. Similarly, $\alpha_{v_{p+1}}=\alpha_{v_i}$, for $i=2,\ldots ,p$. Therefore the claim is proved and $m_G(\lambda) \leq n -\gamma_p$.
\end{proof}

\section*{Acknowledgements}
A. Abiad is partially funded by the Fonds Wetenschappelijk Onderzoek (FWO), grant 1285921N.\\
 S. Akbari is partially funded by the Iran National Science Foundation (INSF), grant 96004167.



\end{document}